\begin{document}
\newtheorem{thm}{Theorem}
\newtheorem{cor}[thm]{Corollary}
\newtheorem{lem}{Lemma}
\theoremstyle{remark}\newtheorem{rem}{Remark}
\theoremstyle{definition}\newtheorem{defn}{Definition}

\title{An Extension of Calder\'on Transfer Principle and Its Application to Ergodic Maximal Function}
\author{Sakin Demir}
\maketitle
\begin{abstract}We first prove that the well known transfer principle of A. P. Calder\'on can be extended to the vector-valued setting and then we apply  this extension to vector-valued inequalities for the Hardy-Littlewood maximal function to prove the vector-valued strong type $L^p$ norm inequalities for $1<p<\infty$ and the vector-valued weak type $(1,1)$ inequality for ergodic maximal function.
\end{abstract}
{\bf{Mathematics Subject Classifications:}} 47A35, 28D05, 47A64.\\
{\bf{Key Words:}} Transfer Principle, Ergodic Theory, Vector-valued Inequality.\\
\centerline{}
\indent
 The vector-valued inequalities has long been studied in harmonic analysis, and unfortunately there is not so many results in this direction in ergodic theory. The well known transfer principle of A. P. Calder\'on~\cite{apcal} is a great tool to prove certain type of inequalities in ergodic theory. By using this transfer principle one can use some kind of strong type or weak type inequalities in harmonic analysis to prove some kind of  strong type or weak type inequalities in ergodic theory under certain assumptions. It is, therefore, quite natural to look for a Calder\'on type transfer principle to be able to prove vector-valued inequalities in ergodic theory. In this article we first prove that Calder\'on transfer principle can be extended to the vector-valued setting under the same assumptions and then we apply our result to a theorem of C. Fefferman and E. M. Stein~\cite{fs} to prove the vector-valued strong $L^p$ inequalities for $1<p<\infty$ and the vector-valued weak type $(1,1)$ inequality for the ergodic maximal function.\\
We will assume that $X$ is a measure space which is totally $\sigma$-finite and $U^t$ is a one-parameter group of measure-preserving transformations of $X$. We will also assume that for every measurable function $f$ on $X$ the function $f(U^tx)$ is measurable in the product of $X$ with the real line. $T$ will denote an operator defined on the space of locally integrable functions on the real line with the following properties: the values of $T$ are continuous functions on the real line, $T_j$ is sublinear and commutes with translations, and $T$ is semilocal in the sense that there exists a positive number $\epsilon$ such that the support of $Tf$ is always contained in an $\epsilon$-neighborhood of the support of $f$. \\
We will associate an operator $T^\sharp$ on functions on $X$ with such an operator $T$ as follows:\\
Given a function $f_j$ on $X$ let
$$F_j(t,x)=f_j(U^tx).$$
If $f_j$ is the sum of two functions which are bounded and integrable, respectively, then $F_j(t,x)$ is a locally integrable function of $t$ for almost all $x$ and therefore
$$G_j(t,x)=T(F_j(t,x))$$
is a well-defined continuous function of $t$ for almost all $x$. Thus $g_j(x)=G_j(0,x)$ has a meaning and we define
$$T^\sharp f_j=g_j(x).$$
Let now $(T_n)$ be a sequence of operators as above and define
$$Sf=\sup_n|T_nf|$$
and 
$$S^\sharp f=\sup_n|T_n^\sharp f|.$$
 Suppose that $f=(f_1,f_2,f_3,\dots )$ then we have $Tf=(Tf_1,Tf_2,Tf_3,\dots )$. Let now $1\leq r<\infty$ then 
 $$\|f(x)\|_{l^r}=\left(\sum_{j=1}^\infty |f_j(x)|^r\right)^{1/r}$$
 and similarly
 $$\|Tf(x)\|_{l^r}=\left(\sum_{j=1}^\infty |Tf_j(x)|^r\right)^{1/r}.$$
 Also note that
 $$\|f(x)\|_{l^{\infty}}=\sup_j|f_j(x)|$$
 and
  $$\|Tf(x)\|_{l^{\infty}}=\sup_j|Tf_j(x)|.$$
We can now state and prove our main result.
 \begin{thm}\label{vvctp}Let $1\leq r\leq\infty$ and $1\leq p<\infty$. Suppose that there exists a constant $C_1>0$ such that
 $$\left(\int_{\mathbb{R}}\|Sf(t)\|_{l^r}^p\,dt\right)^{1/p}\leq C_1\left(\int_{\mathbb{R}}\|f(t)\|_{l^r}^p\,dt\right)^{1/p}$$
 where $f=(f_1,f_2,f_3,\dots )$ is a sequence of functions on $\mathbb{R}$.
 Then we have
  $$\left(\int_X\|S^\sharp f(x)\|_{l^r}^p\,dx\right)^{1/p}\leq C_1\left(\int_X\|f(x)\|_{l^r}^p\,dx\right)^{1/p}$$
  where $f=(f_1,f_2,f_3,\dots )$ is a sequence of functions on $X$.
 Suppose that there exists a constant $C_2>0$ such that for all $\lambda >0$
 $$\left|\left\{t\in\mathbb{R}:\|Sf(t)\|_{l^r}>\lambda\right\}\right|\leq\frac{C_2^p}{\lambda^p}\int_{\mathbb{R}}\|f(t)\|_{l^r}^p\,dt$$
 where $f=(f_1,f_2,f_3,\dots )$ is a sequence of functions on $\mathbb{R}$.
 Then for all $\lambda >0$ we have
 $$\left|\left\{x\in X:\|S^\sharp f(x)\|_{l^r}>\lambda\right\}\right|\leq\frac{C_2^p}{\lambda^p}\int_X\|f(x)\|_{l^r}^p\,dx$$
 where $f=(f_1,f_2,f_3,\dots )$ is a sequence of functions on $X$.
\end{thm}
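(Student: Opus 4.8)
The plan is to run A. P. Calder\'on's original transfer argument with the scalar absolute value replaced throughout by the $l^r$-norm, which here plays a purely passive role. Fix a sequence $f=(f_j)$ on $X$ whose components are each a sum of a bounded and an integrable function, and set $F_j(t,x)=f_j(U^tx)$. Since $(U^t)$ is a group, $F_j(t,U^sx)=F_j(t+s,x)$, so $F_j(\cdot,U^sx)$ is a translate of $F_j(\cdot,x)$; because each $T_n$ commutes with translations this gives
$$(T_n^\sharp f_j)(U^sx)=T_n\bigl(F_j(\cdot,x)\bigr)(s)\qquad\text{for a.e. }x.$$
Writing $\mathbf F(\cdot,x)=(F_1(\cdot,x),F_2(\cdot,x),\dots)$ and taking the supremum over $n$ and the $l^r$-norm over $j$, this reads $\|S^\sharp f(U^sx)\|_{l^r}=\|S\mathbf F(\cdot,x)(s)\|_{l^r}$. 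Next, for $N>\epsilon$ put $\mathbf F_N(\cdot,x)=\mathbf F(\cdot,x)\chi_{[-N,N]}$; each component of $\mathbf F-\mathbf F_N$ is supported in $\mathbb R\setminus(-N,N)$, so the semilocal hypothesis forces $T_n$ of that component to vanish on $(-N+\epsilon,N-\epsilon)$, and hence, by sublinearity,
$$\bigl\|S\mathbf F_N(\cdot,x)(s)\bigr\|_{l^r}=\bigl\|S^\sharp f(U^sx)\bigr\|_{l^r}\qquad\text{for }|s|<N-\epsilon.$$

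For the strong-type bound I would integrate this identity in $s$ over $(-N+\epsilon,N-\epsilon)$, enlarge the domain to all of $\mathbb R$, apply the hypothesis on $\mathbb R$ to the sequence $\mathbf F_N(\cdot,x)$ (legitimate since its components lie in $L^1$ with compact support), and use $\|\mathbf F(\cdot,x)(t)\|_{l^r}=\|f(U^tx)\|_{l^r}$, obtaining for a.e. $x$
$$\int_{-N+\epsilon}^{N-\epsilon}\bigl\|S^\sharp f(U^sx)\bigr\|_{l^r}^p\,ds\le C_1^p\int_{-N}^{N}\bigl\|f(U^tx)\bigr\|_{l^r}^p\,dt.$$
Integrating over $x$, using Fubini (all integrands are nonnegative) and the measure-preservation of $U^t$ — so that $\int_X\|S^\sharp f(U^sx)\|_{l^r}^p\,dx$ and $\int_X\|f(U^tx)\|_{l^r}^p\,dx$ do not depend on $s$ or $t$ — gives
$$2(N-\epsilon)\int_X\bigl\|S^\sharp f(x)\bigr\|_{l^r}^p\,dx\le 2NC_1^p\int_X\bigl\|f(x)\bigr\|_{l^r}^p\,dx,$$
and dividing by $2(N-\epsilon)$ and letting $N\to\infty$ finishes the strong-type inequality. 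The weak-type assertion is entirely parallel: the weak-type hypothesis on $\mathbb R$ applied to $\mathbf F_N(\cdot,x)$ bounds $|\{s:\|S\mathbf F_N(\cdot,x)(s)\|_{l^r}>\lambda\}|$ by $\lambda^{-p}C_2^p\int_{-N}^{N}\|f(U^tx)\|_{l^r}^p\,dt$; intersecting with $(-N+\epsilon,N-\epsilon)$, where this set coincides with $\{s:\|S^\sharp f(U^sx)\|_{l^r}>\lambda\}$, then integrating over $x$ and invoking Fubini and measure-preservation (via the substitution $y=U^sx$) yields
$$2(N-\epsilon)\bigl|\{x:\|S^\sharp f(x)\|_{l^r}>\lambda\}\bigr|\le 2N\,\frac{C_2^p}{\lambda^p}\int_X\|f(x)\|_{l^r}^p\,dx,$$
and again $N\to\infty$ completes the argument.

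Two matters need attention. First, since $T_n^\sharp$ is only defined on sequences whose components have the form bounded plus integrable, I would prove both inequalities first for such $f$ — which is all the argument above requires — and then pass to a general $f\in L^p(X;l^r)$ by a routine density/limiting argument, the inequalities being trivial when the right-hand sides are infinite. Second, and this is the only genuinely delicate point, one must secure the joint measurability of $(s,x)\mapsto\|S^\sharp f(U^sx)\|_{l^r}$ and $(t,x)\mapsto\|\mathbf F_N(\cdot,x)(t)\|_{l^r}$ so that Fubini applies, and the measurability of $S^\sharp f$ on $X$; here the assumed joint measurability of $f_j(U^tx)$, the continuity in $t$ of $T_n(F_j(\cdot,x))$, the countability of $\{T_n\}$, and the fact that $\|\cdot\|_{l^r}$ is a countable operation reduce everything to exactly the measurability bookkeeping already present in Calder\'on's scalar argument — the vector-valued extension introduces nothing new beyond carrying the $l^r$-norm through each line.
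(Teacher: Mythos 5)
Your proposal is correct and follows essentially the same route as the paper: Calder\'on's transfer argument carried through with the $l^r$-norm in place of the absolute value, using the translation identity $F_j(t,U^sx)=F_j(t+s,x)$, truncation plus semilocality to compare $S^\sharp f$ with $S$ applied to the truncated sequence, the averaging/Fubini step with measure preservation, and the limit $N\to\infty$. The only (harmless) differences are that you apply semilocality to each $T_n$ individually rather than first reducing to a finite subfamily as the paper does, and that you make the density and measurability bookkeeping explicit where the paper leaves it implicit.
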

 \begin{proof}We apply the argument of A. P. Calder\'on~\cite{apcal} to the vector-valued setting to prove our theorem.\\
 Without loss of generality we may assume that the sequence $T_n$ is finite, for if the theorem is established in this case, the general case follows by a passage to the limit. Under this assumption the operator $S$ has the same properties as the operator $T$ above. Let
$$F(t,x)=(F_1(t,x),F_2(t,x),F_3(t,x),\dots )$$
and
$$G(t,x)=(G_1(t,x),G_2(t,x),G_3(t,x),\dots ).$$ 
 As we did before, we let 
$$G_j(t,x)=S(F_j(t,x))$$
for all $j=1,2,3,\dots$ .
We note that 
$$F_j(t,U^sx)=F_j(t+s,x),$$
which means that for any two given values $t_1$, $t_2$ of $t$, $F_j(t_1,x)$ and $F_j(t_2,x)$ are equimeasurable functions of $x$. On the other hand, due to translation invariance of $S$, the function $G_j(t,x)$ has the same property. In fact we have
$$G_j(t,U^sx)=S(F_j(t,U^sx))=S(F_j(t+s,x))=G_j(t+s,x).$$
Let now $F_j^a(t,x)=F_j(t,x)$ if $|t|<a$, $F_j^a(t,x)=0$ otherwise, and let
$$G_j^a(t,x)=S(F_j^a(t,x)).$$
Since $S$ is positive (i.e., its values are nonnegative functions) and sublinear, we have
\begin{align*}
G_j(t,x)=S(F_j)&=S(F_j^{a+\epsilon}+(F_j-F_j^{a+\epsilon}))\\
& \leq S(F_j^{a+\epsilon})+S(F_j-F_j^{a+\epsilon})
\end{align*}
and since $F_j-F_j^{a+\epsilon}$ has support in $|t|>a+\epsilon$, and $S$ is semilocal, the last term on the right vanishes for $|t|\leq a$ for $\epsilon$ sufficiently large, independently of $a$. Thus we have $G_j\leq G_j^{a+\epsilon}$ for $|t|\leq a$. Suppose that $S$ satisfies a vector-valued strong $L^p$ norm inequality. Then since $G_j(0,x)$ and $G_j(t,x)$ are equimeasurable functions of $x$ for all $j=1,2,3\dots$, we have
\begin{align*}
2\int_X\|G(0,x)\|_{l^r}^p\,dx&=\frac{1}{a}\int_{|t|<a}\,dt\int_X\|G(t,x)\|_{l^r}^p\,dx\\
&\leq\frac{1}{a}\int_{|t|<a}\,dt\int_X\|G^{a+\epsilon}(t,x)\|_{l^r}^p\,dx\\
&=\frac{1}{a}\int_X\,dx\int_{|t|<a}\|G^{a+\epsilon}(t,x)\|_{l^r}^p\,dt
\end{align*}
and since $SF_j^{a+\epsilon}=G_j^{a+\epsilon}$
$$\int_{|t|<a}\|G^{a+\epsilon}(t,x)\|_{l^r}^p\,dt\leq C_1^p\int\|F^{a+\epsilon}(t,x)\|_{l^r}^p\,dt,$$
whence substituting above we obtain
$$2\int_X\|G(0,x)\|_{l^r}^p\,dx\leq\frac{1}{a}C_1^p\int_X\,dx\int\|F^{a+\epsilon}(t,x)\|_{l^r}^p\,dt$$
and again, since $F_j(0,x)$ and $F_j(t,x)$ are equimeasurable for all $j=1,2,3\dots$, the last integral is equal to
$$2(a+\epsilon)\int_X\|F(0,x)\|_{l^r}^p\,dx$$
and
$$\int_X\|G(0,x)\|_{l^r}^p\,dx\leq\frac{1}{a}(a+\epsilon)C_1^p\int_X\|F(0,x)\|_{l^r}^p\,dx.$$
Letting $a$ tend to infinity, we prove the first part of our theorem.\\
    Suppose now that $S$ satisfies a vector-valued weak type $(p,p)$ inequality. For any given $\lambda >0$, let $E$ and $\widetilde{E}$ be the set of points where $\|G(0,x)\|_{l^r}>\lambda$ and $\|G^{a+\epsilon}(t,x)\|_{l^r}>\lambda$, respectively, and $\widetilde{E}_y$ the intersection of $\widetilde{E}$ with the set $\{(t,x):x=y\}$. Then we have
$$2a|E|\leq |\widetilde{E}|=\int_X|\widetilde{E}_x|\,dx.$$
On the other hand, since $S$ satisfies a vector-valued weak type $(p,p)$ inequality, we have
$$|\widetilde{E}_x|\leq\frac{C_2^p}{\lambda^p}\int \|F^{a+\epsilon}(t,x)\|_{l^r}^p\,dt.$$
By using the above inequalities and the fact that $F_j(0,x)$ and $F_j(t,x)$ are equimeasurable for all $j=1,2,3,\dots$ we have
$$a|E|\leq\frac{C_2^p}{\lambda^p}(a+\epsilon )\int \|F(0,x)\|_{l^r}^p\,dx.$$
When we let $a$ tend to $\infty$ we find the desired result.
\end{proof}
Let $f$ be a function on $\mathbb{R}$, the Hardy-Littlewood maximal function $Mf$ is defined by
$$Mf(t)=\sup_{t\in I}\frac{1}{|I|}\int_I|f(y)|\,dy$$
where $I$ denotes an arbitrary interval in $\mathbb{R}$.
Consider now an ergodic measure preserving transformation $\tau$ acting on a probability space $(X,\beta,\mu )$  then $M^\sharp$ is the ergodic maximal function defined by
$$M^\sharp f(x)=\sup_n\frac{1}{n}\sum_{k=0}^{n-1}\left|f(\tau^kx)\right|$$
where $f$ is a function on $X$.\\
Recall the following result of C. Fefferman and E. M. Stein~\cite{fs}.
\begin{lem}\label{fslem}Let $f=(f_1,f_2,f_3,\dots )$ be a sequence of functions on $\mathbb{R}$ and $1<r, p<\infty$. Then there are constants $C_{r,p}>0$ and $C_r>0$ such that
\begin{enumerate}[(a)]
\item $\left\|\left(\sum_{j=1}^\infty |Mf_j(\cdot )|^r\right)^{1/r}\right\|_p\leq C_{r,p}\left\|\left(\sum_{j=1}^\infty |f_j(\cdot )|^r\right)^{1/r}\right\|_p$,
\item $\left|\left\{t\in\mathbb{R}:\left(\sum_{j=1}^\infty |Mf_j(t)|^r\right)^{1/r}>\lambda\right\}\right|\leq\frac{C_r}{\lambda}\left\|\left(\sum_{j=1}^\infty |f_j(\cdot )|^r\right)^{1/r}\right\|_1$\\
 for all $\lambda >0$.
\end{enumerate}
\end{lem}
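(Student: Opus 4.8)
Both assertions are classical properties of the Hardy--Littlewood maximal operator, and I would organise a proof as follows. Write $\langle h\rangle_Q=|Q|^{-1}\int_Q h$ for the average over an interval $Q$, and recall that $M$ is sublinear and of strong type $(q,q)$ for $1<q<\infty$; since $r\ge1$ one also has $|f_j(t)|\le\|f(t)\|_{l^r}$ pointwise and, by Minkowski's integral inequality, $\|(\langle f_j\rangle_Q)_j\|_{l^r}\le\langle\|f(\cdot)\|_{l^r}\rangle_Q$. A trivial but repeatedly used remark is that, by Tonelli's theorem and the scalar $L^r$ bound, $\int_{\mathbb R}\|Mf(t)\|_{l^r}^r\,dt=\sum_j\|Mf_j\|_r^r\le C_r^r\sum_j\|f_j\|_r^r=C_r^r\int_{\mathbb R}\|f(t)\|_{l^r}^r\,dt$, i.e.\ (a) holds when $p=r$.

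I would prove the weak-type statement (b) by a Calder\'on--Zygmund argument carried out relative to the scalar function $g(t)=\|f(t)\|_{l^r}\in L^1(\mathbb R)$ rather than to any single $f_j$. Fix $\lambda>0$ and let $\{Q_k\}$ be the Calder\'on--Zygmund intervals of $g$ at height $\lambda$: pairwise disjoint, $\lambda<\langle g\rangle_{Q_k}\le2\lambda$, $\sum_k|Q_k|\le\lambda^{-1}\|g\|_1$, and $g\le\lambda$ a.e.\ off $\Omega=\bigcup_kQ_k$. Split $f_j=c_j+b_j$ with $c_j=f_j$ on $\Omega^c$, $c_j=\langle f_j\rangle_{Q_k}$ on each $Q_k$, and $b_j=\sum_k(f_j-\langle f_j\rangle_{Q_k})\mathbf 1_{Q_k}$. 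For the good part, the bounds $\|c(t)\|_{l^r}\le2\lambda$ everywhere and $\|c(t)\|_{l^r}=g(t)\le\lambda$ off $\Omega$ give $\int_{\mathbb R}\|c(t)\|_{l^r}^r\,dt\le C\lambda^{r-1}\|g\|_1$, whence Chebyshev and the trivial strong $(r,r)$ remark bound $|\{t:\|Mc(t)\|_{l^r}>\lambda/2\}|$ by $C\lambda^{-1}\|g\|_1$. For the bad part put $\Omega^*=\bigcup_k3Q_k$, so $|\Omega^*|\le3\lambda^{-1}\|g\|_1$; the essential point is that $\|Mb(t)\|_{l^r}$ need not be small off $\Omega^*$ (each $Mb_j(t)$ is $\le C\lambda$ there, but the $l^r$-sum over $j$ of such bounds diverges), so I would instead dominate $Mb_j$ by $M$ of the genuinely good function $c_j^\Omega=\sum_k\langle|f_j|\rangle_{Q_k}\mathbf 1_{Q_k}$, which satisfies $\int_{Q_k}c_j^\Omega=\int_{Q_k}|f_j|$ and $\|c^\Omega(t)\|_{l^r}\le2\lambda\,\mathbf 1_\Omega(t)$. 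Indeed, if $t\notin\Omega^*$ and $I$ is an interval containing $t$, then every $Q_k$ meeting $I$ is contained in $3I$, so, using $\int_{Q_k}|b_j|\le2\int_{Q_k}|f_j|=2\int_{Q_k}c_j^\Omega$, one gets $\frac1{|I|}\int_I|b_j|\le\frac2{|I|}\sum_{k:\,Q_k\subseteq3I}\int_{Q_k}c_j^\Omega\le\frac6{|3I|}\int_{3I}c_j^\Omega\le6\,Mc_j^\Omega(t)$, and taking the supremum over $I$ gives $Mb_j(t)\le6\,Mc_j^\Omega(t)$ off $\Omega^*$. Since $\int_{\mathbb R}\|c^\Omega(t)\|_{l^r}^r\,dt\le(2\lambda)^{r-1}\|g\|_1$, the trivial strong $(r,r)$ remark and Chebyshev bound $|\{t\notin\Omega^*:\|Mb(t)\|_{l^r}>\lambda/2\}|$ by $C\lambda^{-1}\|g\|_1$, and adding the three contributions proves (b).

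The strong-type statement (a) then follows in the two remaining ranges of $p$. For $1<p<r$ I would interpolate: the map $f\mapsto Mf$, viewed through the scalar function $t\mapsto\|Mf(t)\|_{l^r}$, is of weak type $(1,1)$ by (b) and of strong type $(r,r)$ by the remark above, and is sublinear since $\|M(f+h)(t)\|_{l^r}\le\|Mf(t)\|_{l^r}+\|Mh(t)\|_{l^r}$, so the Marcinkiewicz interpolation theorem yields strong type $(p,p)$ for every $1<p<r$. For $p>r$ I would use duality: with $s=p/r>1$ choose $0\le\phi\in L^{s'}$ with $\|\phi\|_{s'}=1$, and estimate $\sum_j\int(Mf_j)^r\phi\,dt\le C_r\sum_j\int|f_j|^r\,M\phi\,dt=C_r\int_{\mathbb R}\|f(t)\|_{l^r}^r\,M\phi(t)\,dt\le C_rC'\left(\int_{\mathbb R}\|f(t)\|_{l^r}^p\,dt\right)^{r/p}$, where the first inequality is the weighted Fefferman--Stein estimate $\int(Mh)^rw\,dt\le C_r\int|h|^rMw\,dt$ (valid since $r>1$; itself a consequence of the weighted weak-type bound $w(\{Mh>\alpha\})\le C\alpha^{-1}\int|h|Mw$ by interpolation) and the last inequality uses H\"older together with the boundedness of $M$ on $L^{s'}$, available because $1<s'<\infty$ when $p>r$. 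Taking the supremum over $\phi$ and then an $r$-th root gives (a) for $p>r$.

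The step I expect to be the main obstacle is the bad-part estimate in (b). One cannot apply the trivial $l^r$-valued $L^r$ bound to $(b_j)_j$ directly, because $\int_{\mathbb R}\|b(t)\|_{l^r}^r\,dt$ involves $\int_\Omega g^r$, which is not controlled by $\lambda^{r-1}\|g\|_1$ since $g$ may be very large pointwise on $\Omega$; nor can one bound $\|Mb(t)\|_{l^r}$ pointwise off the dilated cubes, since the $l^r$-sum over $j$ of the (valid) bounds $Mb_j(t)\le C\lambda$ diverges. The remedy, which is also the one place where $r\ge1$ is genuinely used, is the pointwise domination $Mb_j(t)\le6\,Mc_j^\Omega(t)$ for $t\notin\Omega^*$, obtained from the containment $Q_k\subseteq3I$ of the Calder\'on--Zygmund cubes inside the relevant intervals; this replaces $b_j$ by the localized average $c_j^\Omega$, whose $l^r$-norm is at most $2\lambda$ on $\Omega$ by Minkowski's integral inequality, after which one re-enters the trivial $l^r$-valued $L^r$ bound for $(c_j^\Omega)_j$. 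With (b) established, the remaining cases of (a) are routine, the only non-elementary ingredient being the classical weighted maximal inequality used for $p>r$.
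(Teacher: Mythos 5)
The paper does not prove this lemma at all: it is quoted verbatim from the cited Fefferman--Stein paper, so there is no internal proof to compare against. Your reconstruction is correct and follows the classical route: the trivial $p=r$ case by Tonelli, the weak $(1,1)$ bound via a Calder\'on--Zygmund decomposition of the scalar function $g=\|f(\cdot)\|_{l^r}$ with the bad part dominated pointwise off the dilated intervals by $M$ applied to the localized averages $c_j^{\Omega}$, Marcinkiewicz interpolation for $1<p<r$, and duality through the weighted estimate $\int (Mh)^r w\le C\int |h|^r Mw$ for $p>r$. The individual steps check out (in particular the containment $Q_k\subseteq 3I$, the bound $\int_{Q_k}|b_j|\le 2\int_{Q_k}|f_j|$, and the use of Minkowski's integral inequality to get $\|c^{\Omega}(t)\|_{l^r}\le 2\lambda$), and you correctly isolate the one genuinely delicate point, namely that the naive $l^r$-summation of the scalar bounds $Mb_j\le C\lambda$ fails; so the proposal is a sound proof of the lemma, essentially the standard one found in the literature the paper cites.
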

When we apply Theorem~\ref{vvctp} to Lemma~\ref{fslem} we find the following result.
\begin{thm}\label{vvifemf}Let $f=(f_1,f_2,f_3,\dots )$ be a sequence of functions on $X$ and $1<r, p<\infty$. Then there are constants $C_{r,p}>0$ and $C_r>0$ such that
\begin{enumerate}[(a)]
\item $\left\|\left(\sum_{j=1}^\infty |M^\sharp f_j(\cdot )|^r\right)^{1/r}\right\|_p\leq C_{r,p}\left\|\left(\sum_{j=1}^\infty |f_j(\cdot )|^r\right)^{1/r}\right\|_p$,
\item $\mu\left\{x\in X:\left(\sum_{j=1}^\infty |M^\sharp f_j(x)|^r\right)^{1/r}>\lambda\right\}\leq\frac{C_r}{\lambda}\left\|\left(\sum_{j=1}^\infty |f_j(\cdot )|^r\right)^{1/r}\right\|_1$\\
 for all $\lambda >0$.
\end{enumerate}
\end{thm}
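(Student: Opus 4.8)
The plan is to bound the ergodic maximal function pointwise by an operator of the form $S^{\sharp}$ covered by Theorem~\ref{vvctp}, and then to feed that operator the vector-valued bounds of Lemma~\ref{fslem}. Because Theorem~\ref{vvctp} is stated for a flow $U^{t}$ parametrized by $\mathbb{R}$ whereas $M^{\sharp}$ is built from the single transformation $\tau$, the first step is to pass to the suspension $\widehat{X}=(X\times\mathbb{R})/\mathbb{Z}$, the quotient by the $\mathbb{Z}$-action $(x,t)\mapsto(\tau x,t-1)$; this is a probability space with fundamental domain $X\times[0,1)$, and $U^{s}[x,t]=[x,t+s]$ is a measure-preserving flow on it satisfying the standing hypotheses of the paper. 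Each $f_{j}$ on $X$ lifts to $\widehat{f}_{j}$ on $\widehat{X}$ by $\widehat{f}_{j}[x,t]=f_{j}(\tau^{\lfloor t\rfloor}x)$, so on the fundamental domain $\widehat{f}_{j}$ equals $f_{j}$ and is independent of $t$; consequently $\bigl\|(\sum_{j}|\widehat{f}_{j}|^{r})^{1/r}\bigr\|_{L^{p}(\widehat{X})}=\bigl\|(\sum_{j}|f_{j}|^{r})^{1/r}\bigr\|_{L^{p}(X)}$ for every $1\le p<\infty$.

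Next I would take, for $n=1,2,3,\dots$, the operator $T_{n}g(u)=\frac{1}{n}\int_{u-1}^{u+n}|g(s)|\,ds$ acting on locally integrable functions $g$ on $\mathbb{R}$. Each $T_{n}$ has continuous values, is sublinear, commutes with translations, and is semilocal --- the support of $T_{n}g$ lies in the $n$-neighborhood of the support of $g$ --- so any finite subsequence admits a single $\epsilon$, which is all that is needed since the proof of Theorem~\ref{vvctp} first reduces to finite sequences and afterward lets $a\to\infty$. Two observations then close the argument. First, $[u-1,u+n]$ contains $u$ and has length $n+1$, so $T_{n}g\le 2Mg$ and hence $Sg=\sup_{n}T_{n}g\le 2Mg$; with Lemma~\ref{fslem} this furnishes the hypotheses of Theorem~\ref{vvctp} with $C_{1}=2C_{r,p}$ for $1<p<\infty$ and, taking $p=1$, with $C_{2}=2C_{r}$. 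Second, a direct computation gives $T_{n}^{\sharp}\widehat{f}_{j}[x,t]=\frac{1}{n}\int_{t-1}^{t+n}|f_{j}(\tau^{\lfloor u\rfloor}x)|\,du$, and for $t\in[0,1)$ the interval $[t-1,t+n]$ contains $[0,n]$, so $T_{n}^{\sharp}\widehat{f}_{j}[x,t]\ge\frac{1}{n}\sum_{k=0}^{n-1}|f_{j}(\tau^{k}x)|$; taking the supremum over $n$ gives $S^{\sharp}\widehat{f}_{j}[x,t]\ge M^{\sharp}f_{j}(x)$ for every $t\in[0,1)$ and almost every $x$.

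Granting this, the theorem follows by transfer and descent. For (a), apply the strong-type half of Theorem~\ref{vvctp} on $\widehat{X}$ to obtain $\bigl\|(\sum_{j}|S^{\sharp}\widehat{f}_{j}|^{r})^{1/r}\bigr\|_{L^{p}(\widehat{X})}\le 2C_{r,p}\bigl\|(\sum_{j}|\widehat{f}_{j}|^{r})^{1/r}\bigr\|_{L^{p}(\widehat{X})}$; since $(\sum_{j}|M^{\sharp}f_{j}(x)|^{r})^{1/r}\le(\sum_{j}|S^{\sharp}\widehat{f}_{j}[x,t]|^{r})^{1/r}$ with left side independent of $t$, integrating over $X\times[0,1)$ and using the identity from the first paragraph yields (a) after relabeling the constant. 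For (b), the weak-type half of Theorem~\ref{vvctp} at $p=1$ gives $\mu_{\widehat{X}}\{\|S^{\sharp}\widehat{f}\|_{l^{r}}>\lambda\}\le\frac{2C_{r}}{\lambda}\int_{\widehat{X}}\|\widehat{f}\|_{l^{r}}$, and since $\{x:\|M^{\sharp}f(x)\|_{l^{r}}>\lambda\}\times[0,1)$ is contained, up to a null set, in $\{[x,t]:\|S^{\sharp}\widehat{f}[x,t]\|_{l^{r}}>\lambda\}$, the measure on the left is at most that on $\widehat{X}$, which gives (b). Throughout one first proves these estimates for $f$ with finitely many bounded and integrable components, so that $S^{\sharp}\widehat{f}_{j}$ is defined in the sense of the paper, and then passes to the general case by monotone convergence.

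I expect the one genuinely delicate point to be the final inequality of the second paragraph: making the transferred operator $S^{\sharp}$ dominate the discrete ergodic maximal operator at every base point of the fundamental domain rather than only on the null slice $t=0$, which is why the averaging window of $T_{n}$ is widened by one unit to the left. Everything else --- the four structural properties of $T_{n}$, the comparison $Sg\le 2Mg$, and the bookkeeping with fundamental domains --- is routine. Alternatively, one may note that the proof of Theorem~\ref{vvctp} applies verbatim with $\mathbb{R}$ replaced by $\mathbb{Z}$ and $U^{t}$ by $\{\tau^{n}\}$, whereupon choosing $T_{n}$ to be the discrete average realizes $M^{\sharp}$ as $S^{\sharp}$ exactly and one needs only the discrete Fefferman--Stein inequality, which follows from Lemma~\ref{fslem} by the standard identification $h\mapsto\sum_{k}h(k)\chi_{[k,k+1)}$.
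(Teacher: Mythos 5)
Your proposal is correct, and it is considerably more than the paper actually supplies: the paper's entire proof of this theorem is the single sentence ``When we apply Theorem~\ref{vvctp} to Lemma~\ref{fslem} we find the following result,'' with no indication of how to reconcile the fact that Theorem~\ref{vvctp} is stated for a measure-preserving \emph{flow} $U^t$ on $X$ while $M^\sharp$ is built from the single transformation $\tau$, nor of which operators $T_n$ to transfer. Your suspension construction $\widehat{X}=(X\times\mathbb{R})/\mathbb{Z}$, the choice $T_ng(u)=\frac{1}{n}\int_{u-1}^{u+n}|g(s)|\,ds$, the comparison $Sg\le 2Mg$ (which converts Lemma~\ref{fslem} into the hypotheses of Theorem~\ref{vvctp}), and the pointwise domination $S^\sharp\widehat{f}_j[x,t]\ge M^\sharp f_j(x)$ on the whole fundamental domain $t\in[0,1)$ (not just the null slice $t=0$) are exactly the missing bridge, and each step checks out: the lift $\widehat{f}_j[x,t]=f_j(\tau^{\lfloor t\rfloor}x)$ is well defined on the quotient, $T_n^\sharp\widehat{f}_j[x,t]=\frac{1}{n}\int_{t-1}^{t+n}|f_j(\tau^{\lfloor u\rfloor}x)|\,du$ is the correct computation of the transferred operator, and the descent to $X$ via the fundamental domain is sound. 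The one point of divergence from the paper is quantitative: your route yields the constants $2C_{r,p}$ and $2C_r$, whereas the paper's closing remark asserts that the constants in Theorem~\ref{vvifemf} are \emph{the same} as in Lemma~\ref{fslem}; that sharper claim is only recovered by your alternative suggestion at the end (rerunning the proof of Theorem~\ref{vvctp} over $\mathbb{Z}$ with the discrete averages, so that $M^\sharp$ \emph{is} $S^\sharp$), which is presumably what the author had in mind but never wrote down. Since the theorem as stated only asserts the existence of constants, your proof establishes it in full.
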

Note that the constants $C_{r,p}$ and $C_r$ in Theorem~\ref{vvifemf} are the same constants as in Lemma~\ref{fslem}.

\vspace{1 cm}
\noindent
Sakin Demir\\
E-mail: sakin.demir@gmail.com\\
Address:\\
İbrahim Çeçen University\\
Faculty of Education\\
04100 Ağrı, TURKEY.

\begin{thebibliography}{99}
\bibitem{apcal}A. P.~Calder\'on, 
\emph{Ergodic theory and translation-invariant operators},
Proc. Nat. Acad. Sci. USA Vol. 59 (1968) 349-353.
\bibitem{demir1}S.~Demir, \emph{A generalization of Calder\'on transfer principle}, J. Comp. \& Math. Sci. Vol. 9 (5)  (2018) 325-329.
\bibitem{fs}C.~Fefferman and E. M.~Stein, 
\emph{Some maximal inequalities},
Amer J. Math Vol. 93 (1971) 107-115.
\end{thebibliography}
\end{document}